\documentclass[12pt, a4paper]{amsart}
\usepackage{amsfonts}
\usepackage{bbold}
\usepackage{amssymb}
\usepackage{amsthm}
\usepackage{amsmath}
\usepackage{bbm}
\usepackage{amscd}
\usepackage{stmaryrd}
\usepackage{array}
\usepackage{url}
\usepackage{hyperref}
\usepackage[latin2]{inputenc}
\usepackage{t1enc}
\usepackage{enumerate}
\usepackage[mathscr]{eucal}
\usepackage[british]{babel}
\usepackage{graphicx}
\usepackage{epsfig}
\usepackage{epsf}
\usepackage{color}
\usepackage{indentfirst}
\usepackage[all]{xy}
\usepackage{graphics}
\usepackage{pict2e}
\usepackage{epic}


\newcommand{\Z}{\mathbb{Z}}
\newcommand{\Fp}{\mathbb{F}_p}
\newcommand{\Qp}{\mathbb{Q}_p}

\newcommand{\sgn}{\operatorname{sgn}}

\newtheorem{thm}{Theorem}

\newtheorem{constr}[thm]{Construction}
\newtheorem{lem}[thm]{Lemma}

\theoremstyle{definition}
\newtheorem*{rem}{Remark}

\begin{document}
\title[Estimating the gcd of the value of two polynomials]{Estimating the greatest common divisor of the value of two polynomials}
\author{P\'eter E.\ Frenkel
 \and
  Gergely Z\'abr\'adi}
\address{ELTE E\"{o}tv\"{o}s Lor\'{a}nd University \\ Faculty of Science \\  Institute of Mathematics\\ 1117 Budapest, Hungary
\\ P\'{a}zm\'{a}ny P\'{e}ter s\'{e}t\'{a}ny 1/C \&  Alfr\'ed R\'enyi Institute of Mathematics, Hungarian Academy of Sciences \\
1053 Budapest, Hungary \\
Re\'altanoda u.\ 13-15.  ORCID ID: 0000-0003-2672-8772 and 0000-0002-7293-3569}
\email{frenkelp@cs.elte.hu, zger@cs.elte.hu}
\thanks{This project has received funding from  the European Research Council
(ERC) under the European Union's Horizon 2020 research and innovation
program (grant agreement No.\ 648017), from the MTA R\'enyi Lend\"ulet
Groups and Graphs research group, from the Hungarian National
Research, Development and Innovation Office -- NKFIH, OTKA grants no.\
K104206 and K109684, and from the J\'anos Bolyai Scholarship of the Hungarian Academy of Sciences.}
\date{\today}

\begin{abstract} Let $p$ be a fixed prime, and let $v(a)$ stand for the exponent of $p$ in the prime factorization of the integer $a$.
Let $f$ and $g$ be two monic polynomials with integer coefficients and  nonzero resultant $r$.
Write 
 $S$ for the 
  maximum of $v(\gcd (f(n), g(n)))$ over all integers  $n$.  It is known that $S  \le v(r)$.  We give various lower and upper bounds for   the least possible value of $v(r)-S$ provided that a  given power $p^s$ divides both $f(n)$ and $g(n)$ for all $n$. In particular, the least possible value is
$ps^2-s$  for $s\le p$ and is asymptotically $(p-1)s^2$ for large $s$.
\end{abstract}
\maketitle
Let $f,g\in\mathbb{Z}[x]$ be monic polynomials with nonzero resultant $r$.  Our interest is in the range of the greatest common divisor of $f(n)$ and $g(n)$ as $n$ varies in $\Z$.  In the recent paper \cite{FP} by J.\  Pelik\'an and the first author, it was shown\footnote{Statement~\eqref{high} was essentially known before, cf.~\cite{GGIS, Kh}}  that
\begin{enumerate}
\item $\gcd(f(n),g(n))$ divides  $r$ for all $n$; moreover,

\item\label{squarefree} for square-free $r$, its range is the set of all (positive) divisors of $r$;

\item\label{notsquarefree} If  $r$ is allowed to  have square divisors, then $|r|$ need not be in the range.  For example, $f(x)=x^2+1$ and $g(x)=x^2-1$
have resultant 4 but  never have gcd 4.

\item\label{high} If $r$ has no divisors of the form $p^p$  with $p$ prime, then 1 appears in the range.
\end{enumerate}

For  statement~\eqref{notsquarefree}, there is an even worse example with resultant 4: $f(x)=
x^2+x+1$ and $g(x)=x^2+x-1$  have $f(n)$ and $g(n)$ coprime for all $n$.  For statement~\eqref{high} with the condition on $r$ removed, there again is a  counterexample with resultant 4:
$f(x)=
x^2+x+2$ and $g(x)=x^2+x$   have $\gcd(f(n),g(n))= 2$ for all $n$.   On the other hand, it will turn out that if $r$ is in the range, then so are all its divisors; see  Theorem~\ref{main}
 below.

In the present paper, we undertake  a  refined study of the case when $r$ can have   prime power  divisors with high exponents. Fix  a  prime $p$, and let $v(a)$ stand for the exponent of $p$ in the prime factorization of the integer $a$. It suffices to study the range of $v(\gcd(f(n),g(n)))$, since if we understand this for all $p$, then the Chinese remainder theorem allows us to read off the range of
$\gcd(f(n),g(n))$.

 Write 
  $S$ for the 
   maximum of $v(\gcd (f(n), g(n)))$ as $n$ varies in $\Z$.  By \cite[Proposition 2(a)]{FP}, we have $S  \le v(r)$.
 Our main goal is to estimate
   the least possible value of $v(r)-S$ provided that 
   $v(\gcd(f(n),g(n)))\ge s$ for all $n$.   We develop two different methods.
 Up to Theorem~\ref{mainlarge},  we
 use the definition of the resultant in terms of the coefficients of $f$ and $g$,
 while from Construction~\ref{sharplarge} on, we 
 use the equivalent definition  in terms of the roots of $f$ and $g$.

 Let
 \begin{equation}\label{f}f(x)=a_0x^k+a_1x^{k-1}+\dots +a_k\end{equation} and
  \begin{equation}\label{g}g(x)=b_0x^l+b_1x^{l-1}+\dots +b_l,\end{equation} where $a_0=b_0=1$.
 Recall that, by definition, $r$ is the determinant of the \it Sylvester matrix \rm \begin{equation}\label{Sylv}M=\begin{pmatrix} a_0 & a_1 & \dots& \dots & a_k & & \\&a_0 & a_1 & \dots&\dots  & a_k &\\  & & \dots&\dots  & \dots &\dots &\dots \\&&&a_0 & a_1 & \dots &\dots & a_k \\ b_0 & b_1 & \dots &\dots & b_l & & \\&b_0 & b_1 & \dots &\dots & b_l &\\  & & \dots&\dots   & \dots &\dots &\dots \\&&&b_0 & b_1 & \dots &\dots & b_l
\end{pmatrix}\end{equation} of the two polynomials. Note that $M$ is an $(l+k)$-square matrix; the first $l$ rows are built from the coefficients of $f$, and the last $k$ rows
 are built from the coefficients of $g$, padded with zeros.

We shall need the following interpretation of the resultant.

  \begin{lem}\label{quotientring} If $f$ and $g$ are monic polynomials with integer coefficients and nonzero resultant $r$, then $|r|=|\Z[x]/(f,g)|$, where $(f,g)$ stands for the ideal generated by $f$ and $g$.
  \end{lem}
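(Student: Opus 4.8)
The plan is to realize $\Z[x]/(f,g)$ as the cokernel of the Sylvester map and then read off its order from the Smith normal form. Write $\Z[x]_{<d}$ for the free $\Z$-module of polynomials of degree $<d$, and consider the $\Z$-linear map
$$\Phi\colon \Z[x]_{<l}\oplus\Z[x]_{<k}\longrightarrow\Z[x]_{<k+l},\qquad (u,v)\mapsto uf+vg.$$
Expressed in the monomial bases $1,x,x^2,\dots$ of source and target, the matrix of $\Phi$ is (a transpose of, and possibly a reordering of) the Sylvester matrix $M$ from \eqref{Sylv}, so $|\det\Phi|=|\det M|=|r|\neq 0$. Both source and target are free $\Z$-modules of rank $k+l$, so by the structure theorem for finitely generated abelian groups (equivalently, Smith normal form), $\Coker\Phi$ is finite of order $|\det\Phi|=|r|$. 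It therefore suffices to produce an isomorphism $\Coker\Phi\cong\Z[x]/(f,g)$.

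For this, consider the composite $\Z[x]_{<k+l}\hookrightarrow\Z[x]\twoheadrightarrow\Z[x]/(f,g)$. It is surjective: dividing any $h\in\Z[x]$ by the monic polynomial $f$ gives $h=qf+h'$ with $\deg h'<k\le k+l$, and $h'$ represents the same class as $h$. Its kernel is $(f,g)\cap\Z[x]_{<k+l}$, and the claim is that this kernel is exactly the image of $\Phi$. One inclusion is clear from the degree bounds $\deg(uf)<l+k$, $\deg(vg)<k+l$. For the other, take $w=af+bg\in(f,g)$ with $a,b\in\Z[x]$ and $\deg w<k+l$; dividing $b$ by $f$ yields $b=qf+v$ with $\deg v<k$, so $w=(a+qg)f+vg$. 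Putting $u:=a+qg$, the identity $uf=w-vg$ together with $\deg w<k+l$ and $\deg(vg)<k+l$ forces $\deg(uf)<k+l$, hence $\deg u<l$ since $f$ is monic of degree $k$. Thus $w=\Phi(u,v)$ lies in the image of $\Phi$. By the first isomorphism theorem $\Z[x]/(f,g)\cong\Z[x]_{<k+l}/\Phi(\cdot)=\Coker\Phi$, which has order $|r|$, as required.

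The one genuinely delicate point is this kernel computation — that an element of the ideal $(f,g)$ of degree less than $k+l$ can always be written using an $f$-multiplier of degree $<l$ and a $g$-multiplier of degree $<k$; the device is simply to reduce the $g$-multiplier modulo $f$ and track degrees, using that $f$ is monic. Everything else is routine: matching the matrix of $\Phi$ with the Sylvester matrix, and the standard fact that an integer square matrix of nonzero determinant $D$ has cokernel of order $|D|$. (One could instead identify $\Z[x]/(f,g)$ with the cokernel of multiplication by $g$ on the rank-$k$ free module $\Z[x]/(f)$, but computing that determinant as $\pm r$ needs essentially the same Sylvester-matrix bookkeeping, so the direct approach above is preferable here.)
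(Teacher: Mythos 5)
Your proof is correct and is essentially the paper's own argument: both identify the degree~$<k+l$ part of the ideal $(f,g)$ with the row span (equivalently, the image of the Sylvester map), using the same monic-division trick to control the multiplier degrees, and then invoke the fact that an integer matrix of nonzero determinant $D$ has cokernel of order $|D|$ (stated in the paper via parallelotope volume, in your write-up via Smith normal form — the same fact).
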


Note that for $r=0$  (which is excluded throughout this paper), we would have $ |\Z/(f,g)|=\infty$ because $f$ and $g$ would have  a nonconstant common divisor in $\Z[x]$.

Note also that Lemma~\ref{quotientring} implies  \cite[Proposition 2(a)]{FP}: the greatest common divisor $(f(n),g(n))$ divides the resultant $r$.  Indeed, there is a surjective ring homomorphism from
$\Z[x]/(f,g)$ onto $\Z/(f(n),g(n))$.

   The statement and proof of Lemma~\ref{quotientring}  are reminiscent of \cite[Theorem 1.19]{J}, which was reproved as \cite[Theorem 5]{FP}.  In that theorem, the coefficients come from a  field $F$, and the  claim is that the corank of the Sylvester matrix  $M$ is the dimension over $F$ of the quotient  ring $F[x]/(f,g)$, i.e., the degree of the polynomial $\gcd(f,g)$.

  \begin{proof} Let us identify the free Abelian group $\Z^{k+l}$ with the additive group $\Z[x]_{<k+l}$ of polynomials of degree less than $k+l$ with integer coefficients.  Let any such polynomial correspond to the list of its coefficients, starting with the coefficient of $x^{k+l-1}$ and ending with the constant term.

  Under this correspondence, the subgroup generated by the rows of the Sylvester matrix $M$ is identified with the set of polynomials of the form $\phi f+\psi g$, where $\phi, \psi\in \Z[x]$ have degree less than $l$ and $k$, respectively. Any polynomial of this form is in $(f,g)$. Conversely, any element of  $(f,g)$ of degree less than $k+l$ is an integral linear combination of the rows. To see this, we first write such a polynomial as 
  $\phi_0 f+\psi_0 g$, where we know nothing about the degree of $\phi_0, \psi_0\in \Z[x]$, but then we write $\phi_0=qg+\phi$ with $\phi$ of degree less than $l$, and we define $\psi=qf+\psi_0$.  Then $\phi_0 f+\psi_0 g=\phi f+\psi g$; moreover, this polynomial and $\phi f$ both have degree less than $k+l$, whence so does $\psi g$, showing that $\psi$ has degree less than $k$.

  Thus, the subgroup of $\Z^{k+l}$ generated by the rows of $M$ is identified with the degree $< k+l$ part $(f,g)_{<k+l}$ of the ideal $(f,g)$ of $\Z[x]$. The determinant $r$  of $M$ is the signed volume of the parallelotope spanned by  the rows, therefore $|r|$ is the volume of this parallelotope, which is the cardinality of the quotient  \begin{align*}\Z^{k+l}/\langle\textrm{rows of }M\rangle \simeq\Z[x]_{<k+l} & /(f,g)_{<k+l}\simeq \\ \simeq((f,g)+\Z[x]_{<k+l}) & /(f,g)=\Z[x]/(f,g).\end{align*}
      \end{proof}

For integers $S\ge s\ge 0$, let $$I_{S,s}=\left\{f\in\Z[x]:  p^s|f(n) \textrm{ for all } n, \textrm{ and }  p^S|f(0)\right\}.$$  This is an ideal of $\Z[x]$. Put $R_{S,s}=\Z[x]/I_{S,s}$. The cardinality of this quotient ring will play  a central role in our computations.
The cardinality can be expressed  in terms of the functions 
 $$\alpha (j)=v(j!)=\left\lfloor \frac jp\right\rfloor+\left\lfloor \frac j{p^2}\right\rfloor+\left\lfloor \frac j{p^3}\right\rfloor+\dots$$ and $\beta  (m)=\min\{j:\alpha(j)\ge m\}$.
Put $B(s)=\sum_{m=1}^s\beta(m)$.

Note that $\alpha$ is superadditive: $$\alpha(j_1+j_2)\ge\alpha(j_1)+\alpha(j_2)$$ for all nonnegative integers $j_1$ and $j_2$.  It follows that $\beta$ is subadditive:
$$\beta(m_1+m_2)\le\beta(m_1)+\beta(m_2)$$ for all nonnegative integers $m_1$ and $m_2$.

Note also that $\alpha(j)=\lfloor j/p\rfloor$ for $0\le j< p^2$, and $\alpha(p^2)=p+1$, whence $\beta(m)=pm$ for $1\le m\le p$ and $B(s)=p\binom{s+1}2$ for $1\le s\le p$.  On the other hand, $\alpha (j)\sim j/(p-1)$ for large $j$, whence $\beta (m)\sim (p-1)m$ for large $m$ and $B(s)\sim (p-1)s^2/2$ for large $s$.

\begin{lem}\label{kellerolson}
We have $$|R_{S,s}|=p^{S-s+B(s)}.$$
\end{lem}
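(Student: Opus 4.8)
The plan is to exhibit an explicit $\Z$-basis of the ideal $I_{S,s}$ that is adapted to a $\Z$-basis of $\Z[x]$, so that $R_{S,s}$ becomes a finite direct sum of cyclic $p$-groups whose orders multiply to the claimed power of $p$. The right basis of $\Z[x]$ is the one given by the falling factorials $x^{\underline{j}}:=x(x-1)\cdots(x-j+1)$, $j\ge 0$: each is monic of degree $j$, so they form a $\Z$-basis, and for $j\ge 1$ we have $x^{\underline{j}}(0)=0$, so that $f=\sum_j c_j x^{\underline{j}}$ satisfies $f(0)=c_0$. Writing $\Delta$ for the forward difference $(\Delta h)(x)=h(x+1)-h(x)$, one checks directly that $\Delta x^{\underline{j}}=j\,x^{\underline{j-1}}$, whence $(\Delta^j f)(0)=j!\,c_j$ for all $j$.

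First I would prove the key fact: for $f=\sum_j c_j x^{\underline{j}}\in\Z[x]$ and an integer $s\ge 0$, one has $p^s\mid f(n)$ for all $n\in\Z$ if and only if $v(c_j)\ge s-\alpha(j)$ for every $j$ (equivalently $v(c_j)\ge\max(0,s-\alpha(j))$, since $c_j\in\Z$). For this I would invoke Newton's forward-difference expansion $f(n)=\sum_j\binom{n}{j}(\Delta^j f)(0)$: because every $\binom{n}{j}$ is an integer, divisibility of each $(\Delta^j f)(0)=j!\,c_j$ by $p^s$ implies $p^s\mid f(n)$ for all $n$; conversely $(\Delta^j f)(0)=\sum_{i=0}^j(-1)^{j-i}\binom{j}{i}f(i)$ is an integral combination of values of $f$, so $p^s\mid f(n)$ for all $n$ forces $p^s\mid j!\,c_j$, that is, $v(c_j)\ge s-\alpha(j)$.

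With this description, $I_{S,s}$ becomes transparent: since $f(0)=c_0$ and $\alpha(0)=0$, and since $S\ge s$, the ideal $I_{S,s}$ consists of exactly those $f=\sum_j c_j x^{\underline{j}}$ with $v(c_0)\ge S$ and $v(c_j)\ge\max(0,s-\alpha(j))$ for all $j\ge 1$; in other words $I_{S,s}=p^S\Z\cdot 1\oplus\bigoplus_{j\ge 1}p^{\max(0,s-\alpha(j))}\Z\cdot x^{\underline{j}}$ inside $\Z[x]=\bigoplus_{j\ge 0}\Z\cdot x^{\underline{j}}$. Hence $R_{S,s}\cong\Z/p^S\Z\oplus\bigoplus_{j\ge 1}\Z/p^{\max(0,s-\alpha(j))}\Z$, a finite group (only $j<\beta(s)$ contribute) of order $p^{S+\sum_{j\ge 1}\max(0,s-\alpha(j))}$. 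It then remains to verify the elementary identity $\sum_{j\ge 0}\max(0,s-\alpha(j))=B(s)$; since the $j=0$ term equals $s$, this yields the exponent $S+B(s)-s$ as claimed. That identity follows by writing $\max(0,s-\alpha(j))=\#\{m:1\le m\le s,\ \alpha(j)\le s-m\}$, exchanging the order of summation, and using that $\alpha(j)\le s-m$ holds iff $j<\beta(s-m+1)$ by definition of $\beta$; the inner count is then $\beta(s-m+1)$, and summing over $m=1,\dots,s$ gives $\sum_{k=1}^s\beta(k)=B(s)$.

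The only genuinely non-formal ingredient is the characterization of when $p^s$ divides $f(n)$ for all $n$, i.e.\ the passage through finite differences — everything afterwards is bookkeeping with the falling-factorial basis and the combinatorics of $\alpha$, $\beta$, $B$. The main point to keep straight is that the side condition $p^S\mid f(0)$ touches only the coefficient $c_0$ (the remaining basis polynomials vanish at $0$) and replaces the weaker constraint $v(c_0)\ge s$ coming from $p^s\mid f(n)$, which is precisely why the exponent carries the correction $B(s)-s$ rather than $B(s)$.
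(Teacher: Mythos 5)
Your proof is correct, but it takes a different route from the paper. The paper disposes of the case $S=s$ by citing the classical result of Kempner (reproved by Keller and Olson) that the ring of polynomial functions $\Z/(p^s)\to\Z/(p^s)$ has cardinality $p^{B(s)}$, and then passes to general $S$ by a short index computation: $I_{S,s}$ is the kernel of $f\mapsto f(0)$ on $I_{s,s}$ with image $(p^s)/(p^S)$, giving the extra factor $p^{S-s}$. You instead reprove the Kempner--Keller--Olson count from scratch via the falling-factorial basis and Newton's forward-difference expansion, and your explicit description of $I_{S,s}$ as $p^S\Z\cdot 1\oplus\bigoplus_{j\ge 1}p^{\max(0,s-\alpha(j))}\Z\cdot x^{\underline{j}}$ absorbs the condition $p^S\mid f(0)$ at no extra cost, since only the coefficient $c_0$ sees it. All the individual steps check out: the equivalence $p^s\mid f(n)\ \forall n\iff v(c_j)\ge s-\alpha(j)\ \forall j$ is the standard characterization, and the double-counting identity $\sum_{j\ge 0}\max(0,s-\alpha(j))=\sum_{m=1}^s\beta(s-m+1)=B(s)$ is correct because $\alpha(j)\le s-m$ is equivalent to $j<\beta(s-m+1)$ by the monotonicity of $\alpha$. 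What your approach buys is self-containedness and strictly more information (the full isomorphism type of $R_{S,s}$ as an abelian group, not just its order); what the paper's approach buys is brevity, at the price of an external reference.
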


\begin{proof}
For $S=s$,  the ring $R_{S,s}=R_{s,s}$ is the  ring of polynomial functions $\Z/(p^s)\to
\Z/(p^s)$. By a classical  result  of Kempner~\cite{K}, reproved by Keller and Olson~\cite[Corollary 2.2]{KO}, this ring  has cardinality $p^{B(s)}$. 

For $S\ge s$, observe that $I_{S,s}$ is the kernel of the map   $I_{s,s}\to \Z/(p^S)$, $f\mapsto f(0)$. The image of this map is $(p^s)/(p^S)$, whence  $|I_{s,s}/I_{S,s}|=p^{S-s}$. But $I_{s,s}/I_{S,s}$ is the kernel of the surjective map $R_{S,s}\to R_{s,s}$, therefore $| R_{S,s}|/| R_{s,s}|=p^{S-s}$ and the Lemma follows.
\end{proof}

The first  main result of this paper is the following refinement of  \cite[Proposition 8(a)]{FP}.
\begin{thm}\label{mainlarge}
  Let $f$ and $g$ be monic polynomials with integer coefficients and nonzero resultant $r$. Assume that  a  fixed prime power $p^s$ divides both $f(n)$ and $g(n)$ for all $n$. Let $$
  S=\max_{n\in\Z}v(\gcd(f(n),g(n))).$$ Then $v(r)-S\ge B(s+t)-2B(t)-s$  for all nonnegative integers $t$.
\end{thm}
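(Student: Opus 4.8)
The plan is to produce a single quotient ring of $\Z[x]/(f,g)$ whose $p$-part has order divisible by $p^{S+B(s+t)-2B(t)-s}$; since by Lemma~\ref{quotientring} that $p$-part has order exactly $p^{v(r)}$, the existence of such a quotient is equivalent to the asserted inequality. First I would replace $f(x),g(x)$ by $f(x+n_0),g(x+n_0)$ for an $n_0$ attaining $v(\gcd(f(n_0),g(n_0)))=S$; this does not change $r$ and lets us assume from now on that $v(\gcd(f(0),g(0)))=S$.

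The quotients to use are $\Z[x]/(f,g,h)$ with $h=\prod_a(x-a)^{m_a}$ a product of linear factors at integer points $a$ with multiplicities $m_a$ to be chosen. Since $\bigcap_a (x-a)^{m_a}=(h)$, the Chinese remainder map embeds $\Z[x]/(f,g,h)$ into $\prod_a \Z[x]/\bigl((f,g)+(x-a)^{m_a}\bigr)$, with image equal to that of $\Z[x]$; localised at $p$, this image is $\Zp[x]/K$, where $K$ collects those $P$ whose $m_a$-jet at $a$ lies, for each $a$, in the ideal generated by the $m_a$-jets of $f$ and of $g$. Since $f,g\in I_{s,s}$, each such jet is divisible by the corresponding jet of $I_{s,s}$ (and at $a=0$ by that of $I_{S,s}$), so $|\Zp[x]/K|$ is bounded below by the order of the analogous quotient built from $I_{s,s}$ and $I_{S,s}$; the Keller--Olson description $R_{s,s}\cong\bigoplus_d\Z/p^{\,s-\alpha(d)}$ (and similarly for $R_{t,t}$ and $R_{s+t,s+t}$) turns this lower bound into an explicit power of $p$. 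Taking all $m_a=1$ and $a=0,1,\dots,\beta(s+1)-1$ already recovers the case $t=0$: the gross contribution $S+(\beta(s+1)-1)s$, minus the Vandermonde-type loss $\sum_{d<\beta(s+1)}\alpha(d)$ of the Chinese remainder map, equals $S+B(s)-s$ by the identity $\sum_{d<\beta(s+1)}\bigl(s-\alpha(d)\bigr)=B(s)$.

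The hard part will be the general-$t$ case, where one takes multiplicities $m_a\ge 1$ near every residue class, thereby genuinely using that $p^s\mid f(n),g(n)$ for \emph{all} $n$ — i.e.\ $f,g\in I_{s,s}$ — rather than finitely many values. Two competing estimates then have to be balanced: the jet gain, which after one fully exploits $fh,gh\in I_{s+t,s+t}$ is of size about $B(s+t)-B(t)$, against the Vandermonde/Chinese-remainder loss, of size about $B(t)$, together with the enhanced $p^S$ at $0$ and the $-s$ coming from $\alpha(0)=0$. Matching all the resulting $\alpha$-sums — equivalently, optimising the multiplicities $m_a$ — is exactly what produces the value $B(s+t)-2B(t)-s$, and the subadditivity of $\beta$ recorded before the statement is what makes this optimisation come out cleanly.
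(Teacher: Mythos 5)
Your opening moves coincide with the paper's: translate so that $v(\gcd(f(0),g(0)))=S$, then use Lemma~\ref{quotientring} to bound $v(r)$ below by the $p$-order of a further quotient of $\Z[x]/(f,g)$. Your $t=0$ computation is also essentially right (it amounts to $v(r)\ge v(|\Z[x]/((f,g)+I_{S,s})|)=v(|R_{S,s}/(\bar f,\bar g)|)=S-s+B(s)$, since $(\bar f,\bar g)=0$ there). The genuine gap is the general-$t$ case, which you only assert. Worse, the reduction you actually write down cannot reach the stated bound: your quantitative input is that the jets of $f$ and $g$ lie in the corresponding jets of $I_{s,s}$ (resp.\ $I_{S,s}$ at $0$), i.e.\ you bound $|\Z[x]/(f,g,h)|$ below by $|\Z[x]/(I_{S,s}+(h))|$. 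But $I_{S,s}+(h)\supseteq I_{S,s}$, so this quotient has order at most $|R_{S,s}|=p^{S-s+B(s)}$ \emph{no matter how the multiplicities $m_a$ are optimised}. Since $B(s+t)-2B(t)$ strictly exceeds $B(s)$ for suitable $t$ (e.g.\ $t=s$ large, where $B(2s)-2B(s)\sim(p-1)s^2$ while $B(s)\sim(p-1)s^2/2$), no choice of $h$ can close the gap by containment arguments alone. The "balancing of $\alpha$-sums" you invoke at the end is therefore not an optimisation that remains to be carried out; it is a step that fails in principle within the framework you set up.

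The missing idea is the one you brush against with "$fh,gh\in I_{s+t,s+t}$" but never exploit: one must bound the \emph{size of the ideal} $(\bar f,\bar g)$ inside a larger ambient quotient, not merely enlarge $(f,g)$ to $I_{S,s}$. The paper passes to $R_{S+t,s+t}=\Z[x]/I_{S+t,s+t}$, of $p$-order $S-s+B(s+t)$, and observes that $I_{t,t}\cdot f\subseteq I_{S+t,s+t}$ (since $v(h(n)f(n))\ge t+s$ for all $n$ and $v(h(0)f(0))\ge t+S$ when $h\in I_{t,t}$). Hence the cyclic module $(\bar f)$ is a quotient of $R_{t,t}$ and has $p$-order at most $B(t)$, likewise for $(\bar g)$, so $v(|(\bar f,\bar g)|)\le 2B(t)$ and $v(r)\ge S-s+B(s+t)-2B(t)$. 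To repair your argument you would need an analogous annihilator (or multiplication-by-$I_{t,t}$) step; without it, the proposal proves only the $t=0$ case.
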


\begin{proof} The resultant being translation invariant, we may and do assume that $p^S$ divides $\gcd(f(0),g(0))$. Using Lemma~\ref{quotientring}, we have \begin{align*}v(r)=&v\left(|\Z[x]  /(f,g)|\right)\ge \\ \ge &v\left(|\Z[x]/((f,g)+I_{S+t, s+t})|\right)=v\left(\left|R_{S+t, s+t}/\left(\bar f, \bar g\right)\right|\right),\end{align*}
where $\bar f$ and $\bar g$ are the natural images in
$R_{S+t, s+t}$  of $f$ and $g$, respectively.  Now observe that in the $\Z[x]$-module $R_{S+t, s+t}$, both elements $\bar f$ and $\bar g$ are  annihilated by the ideal $I_{t,t}$.  Hence $v\left(\left|\left(\bar f\right)\right|\right)\le v(|R_{t,t}|)=B(t)$  by Lemma~\ref{kellerolson}, and similarly for $\bar g$.   Now $$v\left(\left|\left(\bar f, \bar g\right)\right|\right)=v\left(\left|\left(\bar f\right)\right|\right)+v(|( \bar g)|)-v\left(\left|\left(\bar f\right)\cap (\bar g)\right|\right)\le 2B(t),$$  whence
$$v(r)\ge v(|R_{S+t, s+t}|)-v\left(\left|\left(\bar f, \bar g\right)\right|\right)\ge (S+t)-(s+t)+B(s+t)-2B(t)$$ and the Theorem follows.
\end{proof}

For $s=1$, we may choose $t=0$ in Theorem~\ref{mainlarge} to get $v(r)\ge  S+p-1\ge p$, which recovers \cite[Proposition 8(a)]{FP}.
For general $s\ge 0$, choosing $t=s$, we get
$v(r)-S\ge B(2s)-2B(s)-s$. When $s\le p/2$, we have  $B(s)=p\binom {s+1}2$ and $B(2s)=p\binom {2s+1}2$, whence $v(r)-S\ge ps^2-s$. It shall follow from Theorem~\ref{main} and Construction~\ref{sharpsmall} that this lower bound holds true, and is sharp, even
under the weaker assumption that $s\le p$. On the other hand, for large $s$, we have $B(s)\sim (p-1)s^2/2$ and
$B(2s)\sim 2(p-1)s^2$, whence $v(r)-S\gtrsim (p-1)s^2$. We now present a construction showing 
 that this is asymptotically sharp for any fixed $p$.

 \begin{constr}\label{sharplarge}
Consider the polynomials
\begin{eqnarray*}
f(x)&:=&\prod_{j=0}^{\beta(s)-1}(x-j)\ ;\\
g(x)&:=&p^s+\prod_{i=0}^{p-1}(x-i)^{s+1}\end{eqnarray*}  for  an integer $s\ge 0$.
Then 
$v(\gcd(f(n),g(n)))=s$ for all integers $n$.  
 For the resultant $r$, we have $v(r)=s\beta(s)$, whence
 $v(r)-s=s(\beta(s)-1)\sim (p-1)s^2$ 
 when $s\gg p$.
\end{constr}

\begin{proof}
Firstly, note that $f(\beta(s))=\beta(s)!$ divides $ f(n)$ for any integer $n$ since the binomial coefficient $\binom{n}{\beta(s)}={f(n)}/{\beta(s)!}$ is an integer. Therefore, we have $s\le \alpha(\beta (s))= v(\beta(s)!)\leq v(f(n))$. On the other hand, we have $v(g(n))=s$ for all $n$ since $p^{s+1}$ divides $ \prod_{i=0}^{p-1}(n-i)^{s+1}$ for any integer $n$. Hence the statement on 
$v(\gcd(f(n),g(n)))$. Further, we compute
\begin{align*}
v(r)=v\left(\prod_{j=0}^{\beta(s)-1}g(j)\right)=\sum_{j=0}^{\beta(s)-1}v(g(j))=s\beta(s)\ .
\end{align*}
\end{proof}

Let us return to the notations and conditions of Theorem~\ref{mainlarge}.
In the rest of this paper, our main goal is to obtain a  sharp lower bound for $v(r)-S$ when $s\le p$. For this, we recall  a bit of $p$-adic number theory. 
Let $K$ be the splitting field of the product $fg$ over the field $\Qp$ of $p$-adic numbers for the fixed  prime $p$. So we may write $f(x)=\prod_{i=1}^k(x-\gamma_i)$ and $g(x)=\prod_{j=1}^l(x-\delta_j)$ with $\gamma_i,\delta_j\in \mathcal{O}$ ($i=1,\dots,k$; $j=1,\dots,l$), where $\mathcal{O}$ denotes the valuation ring in $K$ with uniformizer $\pi$ and residue field $\mathbb{F}= \mathcal{O}/(\pi)$. We put $e=v_\pi(p)$ for the absolute ramification index of $K$, 
 where $v_\pi$ stands for the $\pi$-adic valuation. We extend the $p$-adic valuation $v$ to $K$ by putting $v=v_\pi/e$. In particular, we have $v(\pi)={1}/{e}$, and  the $v$-value of any element of $\mathcal O$ is  a nonnegative integer multiple of $1/e$.
We have $e\cdot |\mathbb F: \Fp|=|K:\Qp|$, but this will not be used in the sequel.





For integers $n\in\mathbb{Z}$ and $0\le s\in\mathbb{Z}
$, the value $f(n)\in\mathbb{Z}$ is divisible by $p^s$ if and only if $\sum_{i=1}^kv(n-\gamma_i)\geq s$. On the other hand, the resultant of $f$ and $g$ equals $$r=
\prod_{i,j}(\gamma_i-\delta_j)\in\mathbb{Z}.$$ For any fixed $n\in \mathbb{Z}$, we have the following trivial estimate for the $p$-adic valuation of $r$:
\begin{align}
v(r)=\sum_{i,j}v(\gamma_i-\delta_j)\geq \sum_{i,j}\min(v(n-\gamma_i),v(n-\delta_j))\ .\label{trivialvpRu}
\end{align}

Note that the above trivial estimate again implies  \cite[Proposition 2(a)]{FP}: the greatest common divisor $(f(n),g(n))$ divides the resultant $r$. 
Indeed, it suffices to check this locally, i.e.,  \begin{align*}v(\gcd(f(n),g(n)))=\min(v(f(n)),v(g(n)))=\\=\min\left(\sum_{i}v(n-\gamma_i),\sum_{j}v(n-\delta_j)\right)\leq v(r)\end{align*} for all primes $p$. The latter inequality follows easily from \eqref{trivialvpRu} by choosing a maximum among the multiset $$\{v(n-\gamma_i),v(n-\delta_j)\mid 1\leq i\leq k,1\leq j\leq l\}.$$

In order to estimate this further from below, we need the following lemma stating (in the special case of $I=\emptyset$) that whenever $s\le p$ and $f(n)$ is divisible by $p^s$ for all $n$, then there are at least $s$ roots of $f$ in $\overline{\mathbb{Q}_p}$ congruent to each integer modulo $p$.

\begin{lem}\label{partialmanycongruent}
Let $m\in \mathbb{Z}$ be a fixed integer, and let $I\subseteq \{1,\dots,k\}$ be an arbitrary subset such that for all $i\in I$ we have $v(m-\gamma_i)\notin\mathbb{Z}$. Further, let $0\leq t_I<p$ be the number of indices $i\in \{1,\dots,k\}\setminus I$ with $v(m-\gamma_i)>0$. Then there exists an integer $n\in\mathbb{Z}$ such that $n\equiv m\pmod{p}$ and $v(f(n))\leq \sum_{i\in I}v(m-\gamma_i)+ {t_I}$.
\end{lem}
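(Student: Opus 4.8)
I want to find an integer $n \equiv m \pmod p$ making $v(f(n))$ as small as the asserted bound. The idea is to localize: factor $f$ according to how its roots behave modulo $\pi$, isolate the roots that "matter" near $m$, and show those are the only contribution once we choose $n$ cleverly inside the residue class $m + p\mathbb Z$.

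First I would organize the roots $\gamma_i$, $i \in \{1,\dots,k\}$, into three groups relative to $m$: (a) the "irrational-valuation" roots $i \in I$, for which $v(m - \gamma_i) \notin \mathbb Z$ by hypothesis; (b) the roots $i \notin I$ with $v(m - \gamma_i) > 0$, of which there are $t_I < p$ by hypothesis; (c) the roots $i \notin I$ with $v(m-\gamma_i) = 0$, i.e. $\gamma_i \not\equiv m \pmod{\mathfrak m}$. For group (c) I would argue that $v(n - \gamma_i) = 0$ for \emph{every} $n \equiv m \pmod p$: since $v(n - m) \ge 1 > 0 = v(m - \gamma_i)$, the ultrametric inequality forces $v(n - \gamma_i) = \min(v(n-m), v(m - \gamma_i)) = 0$. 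So group (c) contributes $0$ no matter what, which is exactly why it does not appear in the bound.

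Next, group (a): for $i \in I$, again $v(m - \gamma_i) \notin \mathbb Z$, hence $v(m - \gamma_i)$ is not an integer but $v(n-m) \ge 1$ is, so $v(m - \gamma_i) \ne v(n - m)$ and the ultrametric equality gives $v(n - \gamma_i) = \min(v(n - m), v(m - \gamma_i)) \le v(m - \gamma_i)$; in fact as soon as $v(n-m) > v(m-\gamma_i)$ we get equality $v(n-\gamma_i) = v(m - \gamma_i)$. So if I choose $n$ with $v(n - m)$ large enough (larger than all the finitely many $v(m - \gamma_i)$, $i \in I$), group (a) contributes exactly $\sum_{i \in I} v(m - \gamma_i)$, matching the first term of the bound. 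Making $v(n-m)$ large while keeping $n \equiv m \pmod p$ is easy: take $n = m + p^N$ for large $N$, or invoke that $\mathbb Z$ is dense in $\mathbb Z_p$.

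The real work is group (b), the $t_I$ roots $\gamma_{i_1}, \dots, \gamma_{i_{t_I}}$ with $v(m - \gamma_i) > 0$, i.e. $\gamma_i \equiv m \pmod{\mathfrak m}$. These are the roots genuinely "clustered" at $m$, and I need to show I can pick $n$ in the class $m + p\mathbb Z$ so that $\sum_{i \in (b)} v(n - \gamma_i) \le t_I$, i.e. average contribution below $1$ per root — and this is where the hypothesis $t_I < p$ is essential. The natural approach: among the residues $n \bmod p^2$ (there are $p$ of them compatible with $n \equiv m \pmod p$, namely $m, m+p, \dots, m+(p-1)p$), a pigeonhole/counting argument should show that for at least one choice, $v(n - \gamma_i) \le 1$ for all $i \in (b)$, or more precisely that $\sum_i v(n-\gamma_i)$ cannot be forced above $t_I$. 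One clean way: reduce mod $\pi^{1+1/e}$-level data — the quantities $\lfloor e\,v(n - \gamma_i)\rfloor$ — and observe that if every one of the $p$ candidate classes $m + jp$ forced some $\gamma_i$ into a "too close" position, we would need too many roots (at least $p$ of them, contradiction with $t_I < p$) to cover all $p$ residues. I would make this precise by the following counting: $v(n - \gamma_i) \ge 2/e\cdot$(something) can happen for at most one class $m + jp \bmod p^2$ per root $\gamma_i$ (since two distinct such classes differ by a unit times $p$), so at most $t_I < p$ of the $p$ classes are "spoiled", leaving a good class. For the good class, each group-(b) root has $v(n-\gamma_i) = v(\gamma_i - m) \le$ controlled, or more carefully $v(n - \gamma_i) \le 1$, summing to $\le t_I$.

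I expect the main obstacle to be making this last pigeonhole argument airtight when the valuations $v(m-\gamma_i)$ are fractional and the residue field is larger than $\mathbb F_p$: one must be careful that "the $p$ residue classes $m + jp \bmod p^2$" are really indexed by $\mathbb F_p$ (they are, since $p\mathbb Z/p^2\mathbb Z \cong \mathbb Z/p$), and that a single root $\gamma_i$ with $\gamma_i \equiv m \pmod{\mathfrak m}$ can be "close modulo $p^2$" to at most one of them — which holds because if $\gamma_i$ were close to both $m + jp$ and $m + j'p$ with $j \not\equiv j' \pmod p$, then $(j - j')p \in p\mathbb Z$ would be $\equiv 0$ modulo a higher power, impossible. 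Once that combinatorial lemma is nailed down, combining groups (a), (b), (c) gives $v(f(n)) = \sum_i v(n - \gamma_i) \le \sum_{i \in I} v(m - \gamma_i) + t_I + 0$, as desired.
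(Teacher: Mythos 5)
Your proposal is correct and follows essentially the same route as the paper: the ultrametric identity $v(n-\gamma_i)=\min(v(n-m),v(m-\gamma_i))\le v(m-\gamma_i)$ for $i\in I$ (valid because $v(n-m)$ is an integer and $v(m-\gamma_i)$ is not), together with a pigeonhole over the $p$ residue classes $m+jp\pmod{p^2}$, each root being $\pi^{e+1}$-close to at most one of them, so that $t_I<p$ leaves a good class where every remaining relevant root contributes at most $1$. One small remark: drop the aside about making $v(n-m)$ large for the roots in $I$ --- it would conflict with the mod $p^2$ choice needed for the other roots, and is unnecessary since the inequality $v(n-\gamma_i)\le v(m-\gamma_i)$ already holds for every $n\equiv m\pmod p$.
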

\begin{proof}
First of all, note that $$v(f(n))=\sum_{i=1}^kv(n-\gamma_i)= \sum_{i\in I}v(n-\gamma_i)+\sum_{i\in \{1,\dots,k\}\setminus I}v(n-\gamma_i).$$

On the one hand, for any integer $n\in\mathbb{Z}$ and $i\in I$,  we have $v(n-m)\in\mathbb{Z}$, whence $v(n-m)\neq v(m-\gamma_i)$, as the latter is not an integer by assumption. So we compute $$v(n-\gamma_i)=v((n-m)+(m-\gamma_i))=\min(v(n-m),v(m-\gamma_i))\leq v(m-\gamma_i).$$

On the other hand, we want  to pick $n\in\mathbb{Z}$ in such a way that we can estimate $$\sum_{i\in \{1,\dots,k\}\setminus I}v(n-\gamma_i)$$ efficiently. 
We have to have $n\equiv m\pmod{p}$, and we choose $n$ modulo $p^2$ so that 
 all indices $i\in \{1,\dots,k\}\setminus I$  satisfy $v(n-\gamma_i)\le1$ (equivalently, $ < 1+{1}/{e}$). Indeed, we can achieve this by the pigeonhole principle: there are $p$ choices for $n$ mod $p^2$ and these are pairwise incongruent mod $\pi^{e+1}$, so any element $\gamma\in\mathcal{O}$ can only be congruent to one of these choices modulo $\pi^{e+1}$.

 This way we obtain an integer $n\equiv m\pmod{p}$ such that
\begin{align*}
v(f(n))= & \sum_{i=1}^kv(n-\gamma_i) \leq \\ \leq & \sum_{i\in I}v(m-\gamma_i) + \sum_{i\not\in 
I, v(m-\gamma_i)>0}1
=  \sum_{i\in I}v(m-\gamma_i) + t_I
\end{align*}
as desired.
\end{proof}




The  second main result of this paper is the following refinement of  \cite[Proposition 8(a)]{FP}.

\begin{thm}\label{main}  Let $f$ and $g$ be monic polynomials with integer coefficients and nonzero resultant $r$. Assume that $s\le p$ and that the power $p^s$ divides both $f(n)$ and $g(n)$ for all $n$.
Let $$
S=\max_{n\in\mathbb{Z}}v(\gcd(f(n),g(n))).$$
\begin{enumerate}
\item[(a)] 
We have $$v(r)-S\ge ps^2-s\ .$$

\item[(b)] If equality
 holds here, then $v(\gcd(f(n),g(n)))$ takes all the integer values in the interval $[s,S]$.
 \end{enumerate}
\end{thm}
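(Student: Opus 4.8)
For part~(a) the plan is to translate so that the maximum is attained at $0$, hence $v(f(0))\ge S$ and $v(g(0))\ge S$, and to localise the formula $v(r)=\sum_{i,j}v(\gamma_i-\delta_j)$ at the residues modulo $p$. Since $v(\gamma_i-\delta_j)=0$ unless $\gamma_i$ and $\delta_j$ reduce to the same element of $\mathbb F$, and all summands are $\ge 0$, it suffices to bound from below, for each integer residue $a\in\{0,\dots,p-1\}$, the partial sum $\Sigma_a$ over the pairs with $\overline{\gamma_i}=\overline{\delta_j}=\bar a$. Lemma~\ref{partialmanycongruent} with $I=\emptyset$ gives the root count: if $t_\emptyset<p$ it produces $n\equiv a$ with $s\le v(f(n))\le t_\emptyset$, and if $t_\emptyset\ge p$ then $t_\emptyset\ge p\ge s$; either way at least $s$ roots of $f$, and likewise of $g$, reduce to $\bar a$. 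I would then establish the key estimate $\Sigma_a\ge s^2$ for every $a$, and $\Sigma_a\ge s^2+(S-s)$ for the residue containing $0$. Summing over the $p$ residues yields $v(r)\ge(p-1)s^2+s^2+(S-s)=ps^2+S-s$, which is (a).

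To prove the estimate for $\Sigma_a$ I would factor out the ``local factor'' $f_a=\prod_{\overline{\gamma_i}=\bar a}(x-\gamma_i)$. Since $\bar a\in\mathbb F_p$ is fixed by $\operatorname{Gal}(\overline{\mathbb Q_p}/\mathbb Q_p)$, the factor $f_a$ lies in $\mathbb Z_p[x]$, is monic of degree $\ge s$, satisfies $f_a\equiv(x-a)^{\deg f_a}\pmod p$, and has $v(f_a(n))=v(f(n))\ge s$ for all integers $n\equiv a\pmod p$; define $g_a$ similarly. Then $\Sigma_a=v(\operatorname{Res}(f_a,g_a))$, and (substituting $x\mapsto x+a$) the claim becomes: for monic $F,G\in\mathbb Z_p[x]$ of degree $\ge s$ with $F\equiv x^{\deg F}$, $G\equiv x^{\deg G}\pmod p$ and $v(F(m)),v(G(m))\ge s$ for all $m\in p\mathbb Z$, one has $v(\operatorname{Res}(F,G))\ge s^2$, and $v(\operatorname{Res}(F,G))\ge s^2+(S-s)$ if moreover $v(F(0)),v(G(0))\ge S$. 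I would prove this by induction on $s$, peeling off one level of the $p$-adic tree: split the roots of $F$ into those of valuation $<1$ (``outer'') and $\ge1$ (``inner''); the substitution $x\mapsto px$ turns the inner part into a monic $\widetilde F\in\mathbb Z_p[x]$ for which the divisibility hypothesis holds with $s$ decreased by the combined valuation of the outer roots plus the number of inner roots; estimate the outer--outer, outer--inner and inner--inner parts of $\operatorname{Res}(F,G)$ separately and apply the inductive hypothesis to $\operatorname{Res}(\widetilde F,\widetilde G)$.

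I expect this inductive estimate to be the main obstacle, for one subtle reason: a naive pairwise bound $v(\gamma_i-\delta_j)\ge\min(v(\gamma_i),v(\delta_j))$ is genuinely too weak once ramification is present (it does not yield $s^2$), so at each stage one must feed in the hypothesis ``$p^s\mid f(n)$ for all $n$'' through Lemma~\ref{partialmanycongruent} with $I$ the set of roots of non-integral valuation: this forces (the sum of the non-integral valuations) $+$ (the number of roots of positive integral valuation) $\ge s$, which is exactly what rules out configurations hiding mass in heavily ramified roots. Keeping this bookkeeping honest (including the cases $t_I\ge p$, and the boundary case $s=p$) is the heart of the matter; the rest of (a) is formal.

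For part~(b) I would extract from equality in (a) the needed rigidity: every inequality above must be an equality, so no root of $f$ or $g$ reduces into $\mathbb F\setminus\mathbb F_p$, each residue carries only the minimal root mass, and the equality case of the estimate for $\Sigma_0$ forces, near the extremal point $0$, exactly one root of $f$ of valuation $S-s+1$ with all remaining near-$0$ roots of valuation $\le1$ and summing to $s-1$ (and similarly for $g$, with heavy valuation $\ge S-s+1$), while $v(f(0))=S$. Granting this, the values in $[s,S]$ are produced explicitly: for $u=S$ take $n=0$; for $s<u<S$ take $n=p^{\,u-s+1}$, so that $v(n-\gamma)=u-s+1$ for the heavy root (as $u-s+1<S-s+1$) and equals $v(\gamma)$ for each remaining near-$0$ root (as $u-s+1\ge2>1\ge v(\gamma)$), whence $v(f(n))=v(g(n))=u$ and $v(\gcd(f(n),g(n)))=u$; and for $u=s$ take $n\equiv a\pmod{p^2}$ with $a\neq0$, so that every root near $a$ stays at its own (small) distance from $n$ and $v(f(n))=v(g(n))=s$. (If $S=s$ the assertion is vacuous; the case $s=p$ needs a short separate check.) Alternatively one can argue (b) by contraposition: if some $u\in(s,S)$ were skipped then $\{n:p^u\mid\gcd(f(n),g(n))\}=\{n:p^{u+1}\mid\gcd(f(n),g(n))\}$, and this extra coincidence at the extremal residue contributes one more to $v(r)$, contradicting equality; making either the rigidity statement or this implication precise is the substance of part~(b).
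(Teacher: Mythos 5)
Your global architecture for part (a) --- translating so the maximum is attained at $0$, splitting $v(r)$ over residues modulo $p$, and aiming for $\Sigma_a\ge s^2$ for each residue with $\Sigma_0\ge s^2+S-s$ --- is exactly the paper's. But the entire content of (a) is the estimate $\Sigma_a\ge s^2$, and you do not prove it: you propose an induction on $s$ via the rescaling $x\mapsto px$, with separate outer--outer, outer--inner and inner--inner bounds, and you yourself flag this as ``the main obstacle''. As written this is a plan, not a proof; worse, the plan rests on a mistaken premise. The pairwise bound $v(\gamma_i-\delta_j)\ge\min(a_i,b_j)$ (with $a_i=v(m-\gamma_i)$, $b_j=v(m-\delta_j)$) is \emph{not} too weak: the paper deduces $\sum_{i,j}\min(a_i,b_j)\ge s^2$ directly and without any induction. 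For fixed $j$, Lemma~\ref{partialmanycongruent} applied to $f$ with $I=I_j=\{i: a_i\le\min(1,b_j),\ a_i\notin\Z\}$ gives $\sum_{i\in I_j}a_i+t_j\ge s$, whence $\sum_i\min(a_i,b_j)\ge\bigl(\sum_{i\in I_j}a_i+t_j\bigr)\min(1,b_j)\ge s\min(1,b_j)$; a second application, to $g$ with $I=\{j: 0<b_j<1\}$, gives $\sum_j\min(1,b_j)\ge s$. Multiplying these two uses of the hypothesis is the whole trick; the ramified roots are handled precisely by the $I$-versus-$t_I$ split in the lemma, with no need to peel levels of the $p$-adic tree. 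So the key step of your proof is missing, and the detour you propose to fill it is unnecessary (and would itself require substantial, delicate bookkeeping to carry out).

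For part (b), the rigidity you extract from equality is stronger than what actually follows. Equality forces the shape ``one heavy root of valuation $\ge S-s+1$, all other local roots of valuation $\le 1$ with valuations summing to $s-1$'' only for the polynomial containing the root of overall maximal valuation (say $g$); for $f$ one only gets $v(f(0))=S$, and the mass need not be concentrated in a single root. Consequently your computation of $v(f(n))$ at the test points $n=p^{u-s+1}$ does not follow from the rigidity you assert. The paper instead proves the one-sided bound $v(f(p^u))\ge u+s-1$ by yet another application of Lemma~\ref{partialmanycongruent} (with $I$ the set of roots of $f$ of valuation in $(0,1)\setminus\Z$), and combines it with the exact value $v(g(p^u))=u+s-\sgn s$ obtained from the rigidity for $g$. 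Your contrapositive alternative is likewise only heuristic. So both parts have genuine gaps, even though the choice of test points and the qualitative picture in (b), like the residue-by-residue decomposition in (a), are correct.
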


\begin{proof}
We may assume without loss of generality that $$v(\gcd(f(0),g(0)))=S.$$ Fix an integer $m\in\Z$, and set $a_i=v(m-\gamma_i)$ and $b_j=v(m-\delta_j)$ ($i=1,\dots,k$;  $j=1,\dots,l$). 
By assumption, $p^s$ divides $\gcd(f(m),g(m))$, so we have $\sum_{i=1}^ka_i\geq s$ and $\sum_{j=1}^lb_j\geq s$.

 We may assume without loss of generality (possibly swapping $f$ and $g$ and permuting their roots) that the maximum of $$\{a_i,b_j\mid 1\leq i\leq k,1\leq j\leq l\}$$ is achieved at $b_l$.
\begin{lem}\label{fixedvestimate}
\begin{enumerate}
\item[(a)] 
We have
$$\sum_{i,j\colon \gamma_i\equiv m\equiv\delta_j\pmod{\pi}}v(\gamma_i-\delta_j)\geq \begin{cases}s^2\qquad &(m\in\Z)\\ s^2-s+S\quad &(m=0).\end{cases}$$
\item[(b)] If equality holds for    $m=0$, then either $S=s$, or all of the following hold: 
    $$b_l\ge
    S-s+\sgn s,$$
    $$b_j\le \sgn s \textrm{   
     for all } j<l,$$ and  $$\sum_{j=1}^{l-1}b_j=
     s-\sgn s.$$
\end{enumerate}
\end{lem}
Here $\sgn 0=0$ and $\sgn s =1$ for $s\ge 1$.
\begin{proof}
(a) 
We have $v(\gamma_i-\delta_j)\geq \min(a_i,b_j)$ as before. Note that whenever $m\not\equiv\gamma_i\pmod{\pi}$ or $m\not\equiv\delta_j\pmod{\pi}$, then $\min(a_i,b_j)$ vanishes. Hence we obtain
\begin{align}
\sum_{i,j\colon \gamma_i\equiv m\equiv\delta_j\pmod{\pi}}v(\gamma_i-\delta_j)\geq \sum_{j=1}^l\sum_{i=1}^k\min(a_i,b_j)\label{firstandtrivial}
\end{align}
by adding these together. Fix $j\in\{1,\dots,l\}$ for now, and put $$I_j:=\{i\in\{1,\dots,k\}\mid a_i\leq \min(1,b_j)\text{ and }a_i\notin\mathbb{Z}\}\ .$$ Let $t_j$ be the number of indices $i\in \{1,\dots,k\}\setminus I_j$ such that $a_i\neq 0$. 
Applying Lemma~\ref{partialmanycongruent} 
 to the subset $I:=I_j$, we find $$s\leq \sum_{i\in I_j}a_i+t_j\ .$$ On the other hand, for any $i\in \{1,\dots,k\}\setminus I_j$ with $a_i\neq 0$, we have $a_i\geq \min(1,b_j)$, so 
\begin{equation}\label{sumfixedj}
\begin{aligned}
\sum_{i=1}^k\min(a_i,b_j)\geq \sum_{i\in I_j}a_i+t_j\min(1,b_j)
\geq\\\geq\left(\sum_{i\in I_j}a_i+t_j\right)\min(1,b_j)
\geq
s\min(1,b_j)\ .
\end{aligned}\end{equation}
Now Lemma \ref{partialmanycongruent} 
 applied to the polynomial $g$ and to the subset $$I:=\{j\in\{1,\dots,n\}\mid 0<b_j<1\}$$ yields
\begin{align}\label{summinbj1}
s\le\sum_{j\in I}b_j
+t_I\le
\sum_{j=1}^l\min(1,b_j).
\end{align}
The first statement in (a) is a combination of \eqref{firstandtrivial}, \eqref{sumfixedj}, and \eqref{summinbj1}.


Let $m=0$. By the maximality of $b_l$, we have
\begin{align*}
\sum_{i=1}^k\min(a_i,b_l)=\sum_{i=1}^ka_i=v(f(0)) \geq  S
\ .
\end{align*}
Also, 
\begin{align*}
1+\sum_{j=1}^{l-1}\min(1,b_j)\geq \sum_{j=1}^l\min(1,b_j)\geq s\ .
\end{align*}
This yields
\begin{align*}
\sum_{j=1}^l\sum_{i=1}^k\min(a_i,b_j)=\sum_{j=1}^{l-1}\sum_{i=1}^k\min(a_i,b_j)+\sum_{i=1}^k\min(a_i,b_l)
\geq \\ \ge s\sum_{j=1}^{l-1}\min(1,b_j)+S
\geq s(s-1)+S
\end{align*}
as desired.

(b) Fix $j<l$. To have equality in the last chain of inequalities, we must have equality in \eqref{sumfixedj}, whence $\min(a_i,b_j)=\min (1, b_j)$  for all $i$ such that $i\not\in I_j$ and $a_i>0$. 
 We must also have $\sum_{i=1}^ka_i=S$ and, in case $s\ge 1$, we must have $b_l\ge 1$ and
 $\sum_{j=1}^l\min(1,b_j)=s$.

If $b_j>1$ for some $j<l$, then $a_i=1$  for all $i$ such that $i\not\in I_j$ and $a_i>0$, which means that $a_i\le 1$ for all $i$. But \eqref{sumfixedj} holds with equality, so we have $\sum_{i=1}^ka_i=s$, whence $S=s$.


If $b_j\le 1$ for all $j<l$, then $\min (1, b_j)=b_j$ for all $j<l$, hence $$S\le v(g(0))=\sum_{j=1}^lb_j=b_l+\sum_{j=1}^{l-1}\min(1, b_j).$$  If $s\ge 1$, then this is $b_l-1+s$, and $b_l\ge S-s+1$ follows.  If $s=0$, then, since \eqref{sumfixedj} holds with equality, we deduce either $a_1=\dots=a_k=0$ and therefore $S=0=s$, or $b_1=\dots=b_{l-1}=0$ and therefore $b_l\ge S$.
\end{proof}
Adding up the estimates of 
 Lemma~\ref{fixedvestimate}(a) for $m=0, 1,\dots,p-1$, we deduce Theorem~\ref{main}(a).  For (b), observe that the value $S$ is obviously taken. Observe also that if $v(r)-S=ps^2-s$, then
 the value $s$ is also taken, for otherwise Theorem~\ref{main}(a)  yields $$v(
 r)-S\ge p(s+1)^2-(s+1),$$  a contradiction.  Moreover,
 equality holds in Lemma~\ref{fixedvestimate}(a) for all $m$, in particular, for $m=0$.  Thus, Lemma~\ref{fixedvestimate}(b) applies. If $S=s$, then Theorem~\ref{main}(b)  obviously holds.  We treat the other case  given in Lemma~\ref{fixedvestimate}(b).
Let  $\sgn s< u< S-s+\sgn s$. 

 We have $v(p^u-\delta_l)=u$ and $v(p^u-\delta_j)=b_j$ for all $1\leq j\leq l-1$. So we compute $$v(g(p^u))=\sum_{j=1}^lv(p^u-\delta_j)=u+\sum_{j=1}^{l-1}b_j=
 u+s-\sgn s.$$

 We have $v(f(p^u))\geq u$, but also $v(f(p^u))\geq s+u-1$.  To prove the latter, we distinguish two cases. If $a_i\leq u$ for all $1\leq i\leq k$, then $v(p^u-\gamma_i)\geq a_i$, which yields $$v(f(p^u))=\sum_{i=1}^k v(p^u-\gamma_i)\geq \sum_{i=1}^k a_i\geq S>s+u-1.$$ So assume that there exists an index $1\leq i\leq k$ with $a_i>u$, say  $a_k>u$. Put $$I:=\{1\leq i\leq k\mid 0< a_i<1\}$$ and let $t_I$ be the number of indices $i$ 
 with $a_i\ge 1$. By Lemma~\ref{partialmanycongruent}, we find $ s\le\sum_{i\in I}a_i+t_I$. 
 On the other hand, we have $v(p^u-\gamma_i)=a_i$ for all $i\in I$. 
 Summing 
 yields
\begin{align*}
v(f(p^u))=\sum_{i=1}^k v(p^u-\gamma_i)=u+\sum_{i=1}^{k-1} v(p^u-\gamma_i)=\\
=u+\sum_{i\in I}a_i+\sum_{i\in \{1,\dots,k-1\}\setminus I}v(p^u-\gamma_i)\geq u+\sum_{i\in I}a_i+t_I-1\geq
u+s-1.
\end{align*}

We deduce that $$v(\gcd(f(p^u),g(p^u)))=
u+s-\sgn s,$$
which takes all integer  values in the open interval $(s,S)$ when $u$ runs
over integers in $(\sgn s,S-s+\sgn s)$. 
\end{proof}

\begin{rem}
Assuming $s\ge1$ and noting $S\geq s$ in Theorem~\ref{main}(a) yields $v(r)\geq p$, which is the statement of  \cite[Proposition 8(a)]{FP}.
\end{rem}

\begin{rem}
The above proof shows that one can weaken the assumption in Theorem~\ref{main}(b): it suffices to assume that the estimate in case $m=0$ of Lemma~\ref{fixedvestimate}(a) is sharp for the choice of $f$ and $g$.
\end{rem}

\begin{constr}\label{sharpsmall}
Let $p$ be a prime and assume that $0\leq s\leq S$ and, in case $p=2\leq s$, also that $2s+1\leq S$. Then there exists a pair $f,g\in\mathbb{Z}[x]$ of monic polynomials such that  $\min_{n\in\mathbb{Z}}v(\gcd(f(n),g(n)))=s$, $\max_{n\in\mathbb{Z}}v(\gcd(f(n),g(n)))=S$, and $v(r)-S=ps^2-s$  holds for the resultant $r$.  In particular, the estimate in Theorem~\ref{main}(a) is sharp  for any prime $p\geq 2$ and any $0\le s\le p$.
\end{constr}
\begin{proof}
If $s=S=0$ we simply take $f(x)=1$ and $g(x)$ arbitrary. In case $s=0<S$ (resp.\ $s=1\leq S$) we pick $f(x)=x$ (resp.\ $f(x)=x(x-1)$) and $g(x)=x-p^S$ (resp.\ $g(x)=(x-p^S)(x-1-p)$).

For $s\geq 2$ and $p$ odd, the example is $$f(x)=x(x-2p)^{s-1}\prod_{j=1}^{p-1}(x-j)^s$$ and $$g(x)=(x-p^{S-s+1})(x-p)^{s-1}\prod_{j=1}^{p-1}(x-j-p)^s\ .$$ Under this choice, we clearly have $s=\min_{n\in\mathbb{Z}}v(\gcd(f(n),g(n)))$. On the other hand, $f(0)=0$ and $v(g(0))=S$, whence $$\max_{n\in\mathbb{Z}}v(\gcd(f(n),g(n)))\leq S.$$ Moreover, if $n\equiv j\neq 0\pmod{p}$ ($j=1,\dots,p-1$),  then $n$ cannot be congruent to both $j$ and $j+p$ modulo $p^2$, whence $$v(\gcd(f(n),g(n)))=s.$$ Further, if $p\mid n$, then we distinguish three cases:
\begin{enumerate}[$(i)$]
\item $n\equiv 0\pmod{p^{S-s+2}}$. Then $$v(n-p^{S-s+1})=S-s+1 \textrm{ and } v(n-p)=1,$$ whence $v(g(n))=S$.
\item $n\equiv p^{S-s+1}\pmod{p^{S-s+2}}$. Then $$v(n)=S-s+1 \textrm{ and } v(n-2p)=1,$$ showing that $v(f(n))=S$.
\item $0\not\equiv n\not\equiv p^{S-s+1}\pmod{p^{S-s+2}}$. In this case, we have $$v(n)=v(n-p^{S-s+1})\leq S-s+1,$$ and $n$ cannot be congruent to both $p$ and $2p$ modulo $p^2$, showing that $v(\gcd(f(n),g(n)))\leq S$.
\end{enumerate}
In all cases, we obtained $v(\gcd(f(n),g(n)))\leq S$, showing that  $S$ is the maximum. Finally, we compute
\begin{align*}
v(r)= v\left(g(0)g(2p)^{s-1}\prod_{j=1}^{p-1}g(j)^s\right)=\\=v(g(0))+(s-1)v(g(2p))+s\sum_{j=1}^{p-1}v(g(j))=\\
=S+(s-1)s+s(p-1)s=ps^2-s+S
\end{align*}
as claimed.

Finally, if $p=2\leq s\leq ({S-1})/{2}$, then we take $$f(x)=x(x-2)^{s-1}(x-1)^s$$ and $$g(x)=(x-2^{S-2s+2})(x-4)^{s-1}(x-3)^s.$$ A simple computation similar to the one above shows the statement.
\end{proof}



\begin{thebibliography}{99}
\bibitem{FP} Frenkel P.\ E., Pelik\'an J., On the Greatest Common Divisor of the Value of Two Polynomials, \emph{The American Mathematical Monthly}
\textbf{124}(5) (May 2017), 446--450.

\bibitem{GGIS} D.\ Gomez, J.\ Gutierrez, \'A.\  Ibeas,  D.\ Sevilla, Common factors of resultants modulo $p$, \it
Bull.\ Aust.\ Math.\ Soc.
\rm {\bf 79}  (2009),  299--302.


\bibitem{J} S.\ Janson, Resultant and discriminant of polynomials,
\newline
{\tt{https://www.semanticscholar.org}}, 2010 

\bibitem{KO}  Keller G., Olson F.\ R., Counting polynomial functions (mod $p^n$),  \emph{Duke Math.\ J.} \textbf{35} (1968), 835--838.

    \bibitem{K}  Kempner A.\ J., Polynomials and their residue systems, \emph{Amer.\ Math.\ Soc.\ Trans.} \textbf{22}  (1921), 240--288.

        \bibitem{Kh} D.\ I.\ Khomovsky,  On the relationship between the number of solutions of congruence systems and the resultant of two polynomials, \it INTEGERS -- Electronic Journal of Combinatorial Number Theory \rm {\bf 16}, A41

\end{thebibliography}
\end{document}